\newtheorem{thm}{Theorem}
\newtheorem*{thm*}{Theorem}
\newtheorem*{addendum*}{Addendum}
\newtheorem{lem}[thm]{Lemma}
\newtheorem*{convention*}{Convention}
\theoremstyle{definition}
\newtheorem*{defn*}{Definition}
\newtheorem*{scholium*}{Scholium}
\theoremstyle{remark}
\newtheorem*{example*}{Example}
\numberwithin{equation}{section}
\newcommand{\vareps}{\varepsilon}
\newcommand{\AAA}{\mathbb{A}}
\newcommand{\FF}{\mathbf{F}}
\newcommand{\ZZ}{\mathbf{Z}}
\newcommand{\SL}{\mathrm{SL}}
\newcommand{\inv}{^{-1}}
\newcommand{\cat}{{\upshape CAT(0)}\xspace}  %
\newcommand{\tangle}[2]
{\angle_\mathrm{T}(#1,#2)}
\newcommand{\aangle}[3]
{\angle_{#1}(#2,#3)}
\newcommand{\cangle}[3]
{\overline{\angle}_{#1}(#2,#3)}
\DeclareMathOperator{\dist}{dist}
\DeclareMathOperator{\Div}{Div}
\def\Aut{\mathop{\mathrm{Aut}}\nolimits}
\def\min{\mathop{\mathrm{min}}\nolimits}
\def\max{\mathop{\mathrm{max}}\nolimits}
\def\Op{\mathop{\mathrm{Opp}}\nolimits}
\title{Twin building lattices do not have asymptotic cut-points}
\author[P.-E. Caprace]{Pierre-Emmanuel Caprace*}
\address{Universit\'e catholique de Louvain, 1348 Louvain-la-Neuve\\ Belgium}
\email{pierre-emmanuel.caprace@uclouvain.be}
\thanks{*Supported by the Fund for Scientific Research--F.N.R.S., Belgium}
\author[F. Dahmani]{François Dahmani**}
\address{Institut de Math\'ematiques de Toulouse,
 Universit\'e Paul Sabatier  Toulouse III, 31062 Toulouse cedex 9, France}
\email{francois.dahmani@math.univ-toulouse.fr}
\thanks{**Supported by ANR grant 06-JCJC-0099}
\author[V. Guirardel]{Vincent Guirardel**}
\address{Institut de Math\'ematiques de Toulouse,
 Universit\'e Paul Sabatier  Toulouse III, 31062 Toulouse cedex 9, France}    
\email{vincent.guirardel@math.univ-toulouse.fr}
\date{September 2009}
\begin{document}

\begin{abstract}
We show that twin building lattices have linear divergence, which
implies that all asymptotic cones are without cut-points.
\end{abstract}

\maketitle

Studying a class of finitely generated groups up to quasi-isometry
amounts roughly  to studying the large scale geometry of these
groups. In this context, it is often crucial to understand the
geometry of their asymptotic cones. The following
describes a very basic topological property of asymptotic cones of
twin building lattices.

\begin{thm}\label{thm:cut-point}
The asymptotic cones of a twin building lattice do not admit
cut-points.
\end{thm}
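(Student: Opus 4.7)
The strategy is to prove the considerably stronger statement that a twin building lattice $\Gamma$ has a linear divergence function. By a theorem of Dru\c{t}u--Mozes--Sapir, linear divergence of the Cayley graph is equivalent to the absence of cut-points in every asymptotic cone, so such a bound would imply the theorem.

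For linear divergence, I would work with the action of $\Gamma$ on the twin building $(X_+,X_-)$ through a quasi-isometric orbit map. Given three points $a,c,b$ with $c$ roughly at the midpoint of a geodesic from $a$ to $b$ and $d(a,b)=D$, the aim is to construct a path from $a$ to $b$ of length $O(D)$ avoiding a ball of radius $\Theta(D)$ around $c$. The natural source of such a detour is a group element $g\in\Gamma$ that moves both $a$ and $b$ by a bounded amount while displacing $c$ by a distance comparable to $D$: concatenating a geodesic $a\to b$ with its $g$-translate then yields the detour. Candidates for $g$ come from the root groups $U_\alpha\le\Gamma$ of the RGD structure, each of which pointwise fixes one closed half-apartment bounded by a wall $\partial\alpha$ and acts nontrivially across it. Concretely, one would locate $a,c,b$ in a common twin apartment, select a root $\alpha$ whose wall separates $c$ from $\{a,b\}$ with $a,b$ lying deep inside $\alpha$, and apply a suitable element of $U_{-\alpha}$ to displace $c$ by the required amount.

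The main obstacle I anticipate is quantitative control in non-spherical and non-affine types. A single apartment, viewed as the Coxeter complex of an arbitrary Weyl group, can itself have superlinear (even exponential) divergence, so the linear bound for $\Gamma$ cannot come from apartment geometry alone: it must exploit the twin structure essentially, combining actions of positive and negative root groups on $X_+$ and $X_-$. Careful bookkeeping would be required to translate a displacement of a chamber in the building into a displacement of a group element in the word metric, and to ensure that the constructed detour is simultaneously short and far from $c$ as measured in $\Gamma$.
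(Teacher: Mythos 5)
Your reduction to linear divergence via Dru\c{t}u--Mozes--Sapir is the same as the paper's, and your diagnosis of the central obstacle is exactly right: a single apartment, being the Coxeter complex of a general non-affine Weyl group, has superlinear divergence, so the argument must genuinely use the twin structure. But the mechanism you propose is different from the one the paper uses, and I think it has a real gap.

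You propose to find a root-group element $g$ (say $g\in U_\alpha$ with $a,b$ deep in $\alpha$ and $c$ on the other side of $\partial\alpha$) and concatenate $[a,b]$ with its $g$-translate. The difficulty is that ``$g$ moves $c$ far'' does not give ``$g\cdot[a,b]$ avoids a definite ball around $c$''. Since $g$ fixes the half-apartment $\alpha$ pointwise, the portion of $g\cdot[a,b]$ near the wall $\partial\alpha$ coincides with $[a,b]$ near $\partial\alpha$, and that portion may already be close to $c$ unless $c$ is far from $\partial\alpha$. You would therefore need to produce a wall $\partial\alpha$ separating $c$ from $\{a,b\}$ with $c$ at distance comparable to $d(a,b)$ from it, simultaneously with $a,b$ both on the same side; this is a delicate requirement on the wall pattern of the Coxeter complex and is not addressed in the proposal. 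You would also need $a$, $b$, $c$ to (approximately) sit in a common twin apartment, which is not generic, and you would need to make the displacement bounds quantitative with the specific $\delta=\tfrac12$ required by the Dru\c{t}u--Mozes--Sapir criterion; a crude triangle-inequality estimate shows that $d(gc,c)\le 2d(a,c)+O(1)$, so there is very little room.

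The paper resolves the obstacle you identified in a different and cleaner way, which I'd encourage you to compare with your idea. Rather than moving $c$, one moves $a$ and $b$ by a bounded amount: the ``disjoining'' lemma (Lemma~\ref{lem:disjoining}) shows that from any opposite pair $x$ one can move distance $\le C'$ to a pair $x'$ such that the positive (or negative) half $\AAA_\vareps(x')$ of the twin apartment $\AAA(x')$ misses the ball of radius $r$ around $c_\vareps$; the new apartment is obtained by folding $\AAA_\vareps(x)$ across a suitably placed wall coming from the pencil of parallel walls in Lemma~\ref{lem:Coxeter}, which is indeed reminiscent of acting by a root group element, but applied to the apartment rather than to $c$. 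The crucial structural point that your write-up does not articulate is the following: a path $\gamma=(\gamma_+,\gamma_-)$ in $\Op(X)\subset X_+\times X_-$ avoids the ball around $c=(c_+,c_-)$ as soon as $\gamma_+$ avoids the ball around $c_+$ \emph{or} $\gamma_-$ avoids the ball around $c_-$. The path is then built as a concatenation $a\to a'\to a''\to x\to b''\to b'\to b$ of six pieces, in which the positive and negative components alternately satisfy the one-sided avoidance condition, the long middle pieces being taken inside a single twin apartment by \cite[Lemma~2.1]{CR}. This exploitation of the product structure is precisely what lets a twin building lattice escape the superlinear divergence of its apartments, and it is the key idea your proposal is missing. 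You should also note that the transfer from $\Gamma$ to $(\Op(X),d_X)$ uses the non-distortion theorem of \cite{CR}, not merely the fact that the orbit map is an equivariant bijection onto $\Op(X)$.
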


In particular, it follows that twin building lattices are one-ended and have trivial Floyd boundary. 

We emphasize that twin building lattices are \emph{irreducible non-uniform} lattices in the product of the automorphism groups of the associated pair of twin buildings. In particular, the above does not follow directly from the elementary fact that asymptotic cones of non-trivial products have no cut points. We do not know whether an arbitrary finitely generated lattice in a product of at least two non-compact locally compact groups has automatically no asymptotic cut-points; the proof we shall present uses some geometric features which are specific to twin buildings.

Following Drutu--Mozes--Sapir \cite[Proposition~1.1]{DMS}, the
statement of Theorem~\ref{thm:cut-point} is equivalent to the
linearity of the growth rate of a function associated to the
underlying group, called the \textbf{divergence}. Roughly speaking,
the divergence measures the length of a shortest path joining two
points at distance~$n$ in the Cayley graph and avoiding a ball
centered at a third point (see \emph{loc.~cit.}; more details will also be given below). It is
conjectured in \emph{loc.~cit.}, and established in a number of
special cases, that irreducible lattices in higher rank semi-simple
Lie groups all have linear divergence. The corresponding statement
for twin building lattices is established in
Theorem~\ref{thm:divergence} below, relying on the fact that twin
building lattices are not distorted in their ambient locally compact
groups~\cite{CR}. In the affine case, it covers in particular
arithmetic groups of the form $\SL_n(\FF_q[t, t\inv])$. In fact, our proof does not use the fact that the groups under considerations are lattices: the arguments hold for arbitrary groups acting strongly transitively on twin buildings. In particular the above result applies to Kac--Moody groups over arbitrary fields; recall however that the latter groups are finitely generated if and only if the associated ground field is finite.

\medskip
Throughout this note, we let $(W,S)$ denote a Coxeter system with
$S$ finite and $\AAA$ denote the standard apartment of type $(W,S)$,
\emph{i.e.} the corresponding Coxeter complex. The notation of the
present paper is taken over from~\cite{CR}. The reader is referred
to \emph{loc.~cit.} and references therein for basic facts on twin
buildings and their lattices.

\subsection*{A remark on the asymptotic cones of irreducible buildings}

We start with a general observation on the asymptotic geometry of irreducible buildings. Assume thus that $(W,S)$ is irreducible. 

If it is of spherical type, then any building of type $(W,S)$ is bounded and any of its asymptotic cones is therefore reduced to a single point. 

If $(W,S)$ is of affine type, then any asymptotic cone of a building of type $(W, S)$ is itself a (non-discrete) affine building (see \cite{Kleiner-Leeb}); such a building admits cut points if and only if it is a tree or, equivalently, if and only if $W$ is infinite dihedral. 

Finally, if $(W,S)$ is non-spherical and non-affine (\emph{i.e.} if $W$ is not virtually Abelian), then \cite[Cor.4.7 and Th.~5.1]{CF} ensures that any building of type $(W,S)$ admits \emph{rank one geodesics} (also called \emph{Morse geodesics}). Each asymptotic cone of such a building thus possesses cut-points by \cite[Proposition~3.24]{DMS}.

In particular, we conclude that the only non-spherical irreducible buildings whose asymptotic cones have no cut-points are the Euclidean buildings of dimension~$\geq 2$.

Finally, we point out that if $\Gamma$ is a finitely generated (possibly non-uniform) lattice of a building $X$ and if $\Gamma$ possesses an element $\gamma$ which acts as a rank one isometry on $X$, then $\gamma$ is a Morse element with respect to the word metric of $\Gamma$ (see \cite[Lemma~3.25]{DMS}). In particular  $\Gamma$ has asymptotic cut-points. The assumption that the lattices appearing in Theorem~\ref{thm:cut-point} act on \emph{twin} buildings is thus essential.

\subsection*{Pencils of parallel walls in Coxeter complexes}

\begin{lem}\label{lem:Coxeter}
Assume that $(W,S)$ is non-spherical. Then there exists a constant
$C \geq 1$, depending only on $(W,S)$, such that two chambers of
$\AAA$ at distance at least $Cn$ apart are separated by at least $n$
pairwise parallel walls.
\end{lem}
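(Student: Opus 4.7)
The plan is a Dilworth-type combinatorial argument on the walls separating the two chambers. Let $c,c'\in\AAA$ be at gallery distance $d$, and let $\mathcal{M}=\{M_{1},\ldots,M_{d}\}$ denote the walls separating them, so $|\mathcal{M}|=d$. Any two distinct walls of $\AAA$ either are disjoint (in which case we call them \emph{parallel}) or intersect in a codimension-$\geq 2$ subcomplex. I equip $\mathcal{M}$ with the partial order $M_{i}\prec M_{j}$ iff $M_{i}$ and $M_{j}$ are parallel and $M_{i}$ separates $c$ from $M_{j}$. A preliminary observation, using that parallel walls are disjoint convex subsets of the CAT(0) complex $\AAA$, is that two parallel walls both separating $c$ from $c'$ are automatically nested with respect to $c$, so this really is a partial order.

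Under $\prec$, chains are precisely families of pairwise parallel walls, while an antichain consists of walls that are pairwise incomparable. Since any two parallel walls in $\mathcal{M}$ are nested (hence comparable), incomparability forces intersection: antichains in $\mathcal{M}$ coincide with families of pairwise intersecting walls. By Dilworth's theorem, $\mathcal{M}$ decomposes into $K$ chains, where $K$ is the maximum antichain size, so some chain has length at least $d/K$. Taking $C=K$ then produces the required $n$ pairwise parallel walls as soon as $d\geq Cn$.

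The crux of the proof is therefore the uniform bound $K=K(W,S)$ on the number of pairwise intersecting walls separating any two chambers. Two walls $M_{\alpha},M_{\beta}$ intersect in $\AAA$ iff the reflections $s_{\alpha},s_{\beta}$ generate a \emph{finite} dihedral subgroup of $W$; so the task is to bound the cardinality of a set of roots whose pairwise products have finite order. In the irreducible affine case this is immediate: walls partition into finitely many parallelism classes indexed by positive roots of the underlying spherical Weyl group, and a set of pairwise intersecting walls meets each class at most once. In the remaining irreducible (non-affine, non-spherical) cases, one exploits the discrete cocompact action of $W$ on the Davis complex, together with CAT(0) arguments confining the intersection loci of a pairwise-intersecting family to a uniformly bounded region, to bound such configurations by local finiteness. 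The reducible case is handled by the product decomposition of $(W,S)$: at least one factor is non-spherical, and the gallery distance in $\AAA$ is distributed among the factors, so it suffices to apply the bound on the non-spherical factor.

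The main obstacle I expect is precisely this uniform bound on pairwise intersecting walls in the non-affine case, where the naive pigeonhole on parallelism classes fails and one must genuinely invoke the geometry of the Davis complex together with the combinatorics of root subsystems having all pairwise products of finite order. Once $K$ is in hand, the Dilworth step is essentially formal.
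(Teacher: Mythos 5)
Your Dilworth reduction is structurally sound, but the proof has a genuine gap exactly where you flag ``the main obstacle'': you do not prove the uniform bound $K$ on pairwise intersecting separating walls, and the route you sketch for the non-affine case would not work. The claim that ``CAT(0) arguments confin[e] the intersection loci of a pairwise-intersecting family to a uniformly bounded region'' is false as stated: already in a hyperbolic planar Coxeter complex, three pairwise intersecting walls can bound a triangle of arbitrarily large diameter, so the pairwise intersection points are not uniformly bounded in a ball, and local finiteness alone gives nothing. So while the Dilworth step is fine (modulo verifying that $\prec$ really is a partial order, which takes a small convexity argument), the key input is missing, and a proof of $K<\infty$ along the lines you indicate is not available.

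What the paper does instead is shorter and supplies exactly the missing ingredient. By a lemma of Dranishnikov--Januszkiewicz, $W$ has a torsion-free finite-index normal subgroup $W_0$ such that for every wall $H$ and $w\in W_0$, either $w.H=H$ or $w.H\cap H=\varnothing$; in particular any two distinct walls in the same $W_0$-orbit are parallel. Taking $C$ to be the (finite) number of $W_0$-orbits of walls, if two chambers are separated by $Cn$ walls then the pigeonhole principle gives $n$ of them in a single $W_0$-orbit, hence pairwise parallel. Notice this also immediately yields the bound you wanted: a pairwise intersecting family of walls contains at most one representative per $W_0$-orbit, so $K\le C$. In other words, the $W_0$-orbit structure is both the correct tool for your bound and renders the Dilworth machinery unnecessary --- a direct pigeonhole on the orbit ``coloring'' replaces the chain/antichain decomposition. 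If you want to keep your Dilworth framing, the fix is to invoke Dranishnikov--Januszkiewicz (or, equivalently, the Brink--Howlett Parallel Wall Theorem) to produce $W_0$, from which $K\le\#\{W_0\text{-orbits of walls}\}$ follows at once and the rest of your argument closes.

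Two smaller remarks. First, your treatment of the reducible case needs a word of care: when splitting the gallery distance among irreducible factors, the large share could a priori land on a spherical factor, so one must first absorb the bounded contribution of the spherical factors into the constant; the paper's argument avoids this by working with $W_0$-orbits globally and never needs to decompose $(W,S)$. Second, the affine case observation you make is correct, but it is subsumed by the general argument once $W_0$ is in hand.
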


\begin{proof}
The group $W$ contains a torsion-free normal subgroup of finite
index, say $W_0$, such that for any wall $H$ and any $w \in W_0$,
either $w.H = H$ or $w.H \cap H = \varnothing$ (see
\cite[Lemma~1]{DranJan}). Let $C$ be the number of $W_0$-orbits of
walls in $\AAA$. Note that $C$ is a finite number.

Let now $x,y$ by two chambers of $\AAA$ such that $d(x,y) \geq Cn$.
Then $x$ and $y$ are separated by $Cn$ walls. By the pigeonhole
principle, at least $n$ walls amongst these lie in the same
$W_0$-orbit. By the definition of $W_0$, these walls must be
pairwise parallel.
\end{proof}

\subsection*{Disjoining an apartment from a ball}

Before presenting the main geometric lemma needed for the proof of Theorem~\ref{thm:cut-point}, let us briefly fix the notational conventions we shall adopt in the rest of the paper. 

We let $X = X_+ \times X_-$ be a product of a pair of thick buildings of type $(W,S)$ admitting a twinning. The set of pair of opposite chambers is denoted by $\Op(X)\subset X_+\times X_-$ and $\Gamma < \Aut(X)$ is a group preserving the twinning (and is thus a discrete subgroup with respect to the topology of uniform convergence on bounded subsets) and acting transitively on $\Op(X)$. Typical examples of groups admitting such an action are provided by twin building lattices. 
Note that the twinning prevents $\Gamma$ from acting co-compactly on  $X_+\times X_-$.

Following \cite{CR}, we endow $\Gamma$ with its generating sets consisting of those elements mapping some fixed element $x \in \Op(X)$ to an adjacent one.  Recall that two pairs of chambers $(c_+,c_-), (d_+,d_-)$ are adjacent if there is some $s \in S$ such that $c_\vareps$ shares an $s$-panel with $d_\vareps$, for both $\vareps=\pm$.
In particular $\Gamma$ is quasi-isometric to $\Op(X)$ with its induced path metric. Moreover, by \cite{CR}, this path metric is undistorted in $X$. 

\smallskip
The unique twin apartment determined by an opposite pair $x \in
\Op(X)$ will be denoted by $\AAA(x)$. Its respective projections in $X_+$ and $X_-$ are denoted by  $\AAA_+(x)$ and $\AAA_-(x)$.

Although the following result refers only to the combinatorial
gallery distance between chambers, its proof appeals to the \cat
realization of buildings~\cite{Davis} and its good convexity
properties.


\begin{lem}\label{lem:disjoining}
Assume that $(W,S)$ is non-spherical. Then there exists a constant
$C'\geq 1$ such that the following holds.

Consider $x= (x_+, x_-)$ and $c = (c_+, c_-)$ in $\Op(X)$,   $r \geq 1$
and   $\vareps \in \{+,-\}$. Suppose that
$$d_{X_\vareps}(x_\vareps, c_\vareps) \geq C'r.$$ 
Then there exists $x' = (x'_+, x'_-) \in \Op(X)$ such that
$d_X(x,x') \leq C'$ and $\AAA_\vareps (x')$ does not meet the ball of radius
$r$ around $c_\vareps$ in $X_\vareps$.
\end{lem}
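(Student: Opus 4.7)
My plan is to combine Lemma~\ref{lem:Coxeter} with the twin building structure as follows. The strategy is to locate a wall of $\AAA_\vareps(x)$ whose combinatorial extension in $X_\vareps$ is far from $c_\vareps$, and then to use a bounded modification of the opposite pair $x$ to produce a new twin apartment that, while retaining the ``good'' half of $\AAA_\vareps(x)$ bounded by that wall, swings its other half away from $B(c_\vareps,r)$. Let $C$ be the constant of Lemma~\ref{lem:Coxeter}, and take $C'$ a sufficiently large multiple of $C$ (the exact ratio depending on the geometry of the Davis realization of $(W,S)$).

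First, I would apply the combinatorial retraction $\rho = \rho_{\AAA_\vareps(x), x_\vareps}$ of $X_\vareps$ onto $\AAA_\vareps(x)$ based at $x_\vareps$. Because $\rho$ preserves gallery distance from $x_\vareps$, the chamber $c'_\vareps := \rho(c_\vareps) \in \AAA_\vareps(x)$ still satisfies $d(x_\vareps, c'_\vareps) \geq C' r$. Lemma~\ref{lem:Coxeter} then yields $N \geq C'r/C$ pairwise parallel walls $H_1,\ldots,H_N$ of $\AAA_\vareps(x)$ separating $x_\vareps$ from $c'_\vareps$. Each $H_i$ extends to a codimension-one convex subcomplex $\hat H_i$ of the \cat realization of $X_\vareps$, and since the retraction $\rho$ respects the side of each wall, the $\hat H_i$ still separate $x_\vareps$ from $c_\vareps$ in $X_\vareps$.

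Second, using \cat convexity and the fact that the $\hat H_i$ are pairwise disjoint (being parallel in the apartment), I would argue that $d_{X_\vareps}(c_\vareps, \hat H_i)$ grows at least linearly in $i$ when the walls are ordered by distance from $c_\vareps$. Hence for $N$ large enough, some wall $H = H_i$ satisfies $d_{X_\vareps}(c_\vareps, \hat H) > r$, so the combinatorial half-building of $X_\vareps$ on the $x_\vareps$-side of $\hat H$ is entirely disjoint from $B(c_\vareps,r)$. In particular, the root $\alpha \subset \AAA_\vareps(x)$ bounded by $H$ and containing $x_\vareps$ avoids the ball.

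Third, I would exploit the twin apartment structure: the root $\alpha$ is paired with an opposite root $\alpha^{\op} \subset \AAA_{-\vareps}(x)$ across the dual wall $H^{\op}$. By thickness of $X_{-\vareps}$, there is a chamber $y_{-\vareps}$ sharing a panel of $H^{\op}$ with $x_{-\vareps}$, lying outside $\AAA_{-\vareps}(x)$, and opposite $x_\vareps$. Setting $x' := (x_\vareps, y_{-\vareps})$ (with signs adjusted accordingly) gives $d_X(x, x') = 1$, and the twin apartment $\AAA(x')$ shares the twin root $(\alpha, \alpha^{\op})$ with $\AAA(x)$. The new apartment $\AAA_\vareps(x')$ then contains $\alpha$, and its ``other half'' across $H$ is a half-apartment lying in the $c_\vareps$-side half-building of $\hat H$. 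The main obstacle---and the most delicate step---is to show that among the thickness-many available $y_{-\vareps}$, at least one produces an ``other half'' that altogether misses $B(c_\vareps,r)$. I expect this to follow by observing that varying $y_{-\vareps}$ among the panels off $\AAA_{-\vareps}(x)$ genuinely permutes the possible half-apartments filling the $c_\vareps$-side half-building, so a suitable generic choice avoids the ball, using convexity of $B(c_\vareps,r)$ with respect to combinatorial walls to control it globally from the bounding data on $H$.
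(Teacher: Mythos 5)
Your high-level plan (pick a wall of $\AAA_\vareps(x)$ in a large pencil of parallel walls, keep the root $\alpha$ through $x_\vareps$, and switch the other half of the twin apartment by modifying the opposite pair near that wall) is the same skeleton as the paper's proof. But the middle of your argument rests on a false premise, and the final step — which you yourself flag as the delicate one — is exactly the place where the paper supplies a real argument and you do not.

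The false premise is the claim that each wall $H_i$ of the apartment ``extends to a codimension-one convex subcomplex $\hat H_i$ of the \cat realization of $X_\vareps$'' that separates the building and has the $\hat H_i$ pairwise disjoint. In a thick building this is simply not true: walls separate a single apartment, not the building. At each panel of $H_i$ the building branches, and there is no canonical separating ``half-building.'' (Think of the Bruhat--Tits building of $\SL_3(\QQ_p)$: a line in one apartment does not disconnect the building.) Consequently the statement that ``$d_{X_\vareps}(c_\vareps,\hat H_i)$ grows at least linearly in $i$'' and the existence of a $c_\vareps$-avoiding ``half-building on the $x_\vareps$-side of $\hat H$'' have no meaning. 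This is precisely why the paper never mentions half-buildings; it works instead with \cat convexity of a fixed apartment and orthogonal projection to it.

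Two further issues. First, your construction of $x'$ does not typecheck: you set $x'=(x_\vareps,y_{-\vareps})$ where $y_{-\vareps}$ ``shares a panel of $H^{\op}$ with $x_{-\vareps}$.'' But $x_{-\vareps}$ is deep in the interior of the root $\alpha^{\op}$ and has no panel on $H^{\op}$. The paper first walks along $\AAA(x)$ to a nearby opposite pair $x''$ whose $\vareps$-chamber is adjacent to $H$ — keeping this move $\leq C''$ by the Parallel Wall Theorem, so that $H$ can be taken within bounded distance of $x_\vareps$ — and only then modifies across the $H$-panel. Second, and most importantly, you leave the punchline as a hoped-for ``suitable generic choice'' of $y_{-\vareps}$. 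In the paper there is no genericity: it is shown that for \emph{any} legal modification $x'$, the \cat projection of the circumcentre $p_0$ of $c_\vareps$ onto $\AAA_\vareps(x')$ lands on the wall $H$ (via the Alexandrov-angle characterization of projections, applied to the apartment $\alpha(x)\cup\alpha(x')$), and since $p$, the projection of $p_0$ to $\AAA_\vareps(x)$, is separated from $H$ by many parallel walls, one gets $\dist(p_0,\AAA_\vareps(x'))\geq En\geq Dr$. That \cat projection estimate is the actual content of the lemma and is entirely absent from your sketch; without it, there is no mechanism forcing the ``new half'' of $\AAA_\vareps(x')$ to miss $B(c_\vareps,r)$.
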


In the course of the proof, we will use the following consequence of the axioms defining twin buildings: if $(c_+,c_-) \in \Op(X)$, $\pi$ is a panel of $c_+$, then among all chambers of $X_+$ sharing $\pi$ with $c_+$,  at most one is not opposite to $c_-$.

\begin{proof}
Let $C$ be the constant provided by Lemma~\ref{lem:Coxeter}, let $D
>0$ be twice the circumradius of a chamber in the \cat realization
of $X_\vareps$ and let $E >0$ denote the minimal distance (in the
\cat metric) between two parallel walls of an apartment. Notice that
$E$ is indeed positive since the Weyl group $W$ has finitely many
orbits on walls, and the stabilizer of each wall acts cocompactly on
that wall.

Let also $\dist$ denote the \cat distance on the buildings $X_+$ and
$X_-$ and on the standard apartment $\AAA$. Since $W$ acts properly
cocompactly on $W$, it follows that $(\AAA, \dist)$ is
quasi-isometric to the gallery distance on $\AAA$. Upon enlarging
$D$ if necessary, we may and shall therefore assume that for all
chambers $x, y \in X_\vareps $ and any $p \in x$ and $q \in y$, we
have
$$\frac 1 D d_{X_\vareps}(x,y) - D \leq \dist(p,q) \leq D d_{X_\vareps}(x,y).$$

By the `Parallel Wall Theorem' (which follows from
\cite[Theorem~2.8]{BrinkHowlett}), there exists a constant $C'' \geq
0$ such that in any apartment, a chamber at distance~$\geq C''$ from
a given wall is separated from that wall by a parallel wall. The
desired constant $C'$ is defined by
$$C' = \max\{2C''+1,  \frac{CD} E + 2D^2\} .$$

Let now $x$ and $c$ be opposite pairs such that
$d_{X_\vareps}(x_\vareps, c_\vareps) \geq C' r$. We need to find an
opposite pair $x'$ adjacent to $x$ and such that $\AAA(x')$ avoids
the ball of radius $r$ around $c$ in $\Op(X)$. To this end, let
$p_0$ denote the circumcentre of the chamber $c_\vareps$ and let $p$
denote the \cat projection of $p_0$ to  $\AAA(x)_\vareps$. Let also
$y$ be a chamber of $\AAA_\vareps(x)$ containing $p$. If $d_{X_\vareps}(y,
c_\vareps) > D^2( r + 1)$, then any chamber $v \in \AAA(x)_\vareps$
satisfies
$$d_{X_\vareps}(c_\vareps, v) \geq \frac 1 D \dist(p_0, p) \geq \frac 1 {D^2} d_{X_\vareps}(y, c_\vareps) - 1 > r$$
and hence $\AAA(x)_\vareps$ does not meet the ball of radius $r$
around $c_\vareps$. Thus we may take $x' = x$ and we are done in
this case. We assume henceforth that $d_{X_\vareps}(y, c_\vareps)
\leq D^2( r + 1)$.

We have
$$d_{X_\vareps}(x_\vareps, y) \geq d_{X_\vareps}(x, c_\vareps) - d_{X_\vareps}(y, c_\vareps) \geq (C' -D^2) r - D^2  \geq  \frac {CD} E r.$$
By Lemma~\ref{lem:Coxeter}, this implies that $x_\vareps$ and $y$
are separated by at least $n$ pairwise parallel walls, where $n =
\inf \{n \in \ZZ \; | \; n \geq \frac D E r\}.$ Amongst these walls,
let $H$ be the one which is closest to $x_\vareps$. By the Parallel
Wall theorem, we may assume, upon adding supplementary walls to our
pencil of parallel walls, that $d_{X_\vareps}(x_\vareps, H) < C''$.
Let $\alpha$ be the half-apartment of $\AAA(x)_\vareps$ bounded by
$H$ and containing $x_\vareps$ but not $y$. Let $x'' = (x''_+,
x''_-) \in \AAA(x)$ be such that $H$ contains a panel of
$x''_\vareps$, that $x''_\vareps \not \in \alpha$ and that
$d_{X_\vareps}(x_\vareps, x''_\vareps) \leq C''$.  In particular we
have $d_X(x, x'') \leq 2C''$. Now pick any opposite pair $x' =
(x'_+, x'_-)$ adjacent to but distinct from $x''$ 
 and such that
$x'_{-\vareps} = x''_{-\vareps}$ (by the remark before the proof, such a pair exists since $X$ is thick). 
Then $d_X(x, x') \leq 2C'' +1 \leq
C'$ and the apartment $\AAA_\vareps(x')$ shares with $\AAA_\vareps(x'')= \AAA_\vareps(x)$
the half-apartment $\alpha$. It remains to verify that 
  $\AAA(x')_\vareps$ does not meet the
ball of radius $r$ around $c_\vareps$ in $X_\vareps$.

To check this, we first claim that the \cat orthogonal projection of
$p_0$ to $\AAA(x')_\vareps$ belongs the the wall $H$. Let
$\alpha(x)$ (resp. $\alpha(x')$) denote the complement of $\alpha$
in $\AAA(x)_\vareps$ (resp. $\AAA(x')_\vareps$). Since the
projection of $p_0$ to $\AAA(x)_\vareps$ belongs to $\alpha(x)$, it
follows that the projection of $p_0$ to $\alpha$ belongs to the wall
$H$.

Let now $\AAA$ denote the apartment of $X_\vareps$ which is the
union of $\alpha(x)$ and $\alpha(x')$. Then the characterization of
the \cat orthogonal projection in terms of Alexandrov angle
\cite[II.2.4]{Bridson-Haefliger} shows that the projection of $p_0$
to $\AAA$ coincides with $p$. As before, we deduce that the
projection of $p_0$ to $\AAA(x')_\vareps$ belongs to the wall
$\partial \alpha$. Since $\AAA(x')_\vareps = \alpha \cup
\alpha(x')$, the desired claim follows.

The claim implies that
$$\dist(p_0, \AAA(x')) \geq \dist(p, \alpha) \geq En \geq Dr.$$
Since the ball of combinatorial radius $r$ centered at $c$ is
contained in the ball of \cat radius $Dr$ centered at $p_0$, the
desired result follows.
\end{proof}

\subsection*{Divergence}\label{sec:divergence}

Let $\delta \in (0, 1)$ and $\lambda \geq 0$. Following \cite{DMS},
we define the divergence of a pair of points $(a, b)$ in a geodesic
metric space relative to a point $c$ to be the length of a shortest
path from $a$ to $b$ avoiding a ball around $c$ of radius $\delta
\dist(c, \{a, b\})- \lambda$. If there is no such path, then
the divergence is understood to be infinity. The divergence of the
pair $(a,b)$ is the supremum of the divergences relative to all
points $c$. Finally, the \textbf{divergence function}
$\Div_\lambda(n ; \delta)$ is the maximum of all divergences of all
pairs $(a, b)$ with $\dist(a, b) \leq n$. Proposition~1.1 from
\emph{loc.~cit.} ensures that the divergence function $\Div_2(n;
\frac 1 2)$ of a finitely generated group is linear if and only if
no asymptotic cone of the group admits cut-points. Therefore,
Theorem~\ref{thm:cut-point} from the introduction is a consequence
of the following.

\begin{thm}\label{thm:divergence}
The divergence  $\Div_0(n ;\frac{1}{2})$ of a twin building lattice
is linear.
\end{thm}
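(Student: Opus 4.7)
The plan is: given three opposite pairs $a, b, c \in \Op(X)$ with $d_\Op(a, b) \leq n$, construct a path in $\Op(X)$ from $a$ to $b$ of length $O(n)$ avoiding the ball $B(c, r)$ where $r = \frac{1}{2}\min\{d_\Op(c, a), d_\Op(c, b)\}$. If $r \geq n$, any $\Op(X)$-geodesic from $a$ to $b$ has length $\leq n$ and stays at distance $\geq 2r - n \geq r$ from $c$ by the triangle inequality, so we may assume $r < n$.

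By the non-distortion of $\Op(X)$ in $X = X_+ \times X_-$ from \cite{CR}, $d_\Op$ is bi-Lipschitz to $d_{X_+} + d_{X_-}$, and each projection $\Op(X) \to X_\vareps$ is $1$-Lipschitz. Hence $d_\Op(c, a) \geq 2r$ ensures $d_{X_\vareps}(c_\vareps, a_\vareps) \geq K r$ for some sign $\vareps_a$ and a fixed constant $K$; similarly for $b$. Choosing $K \geq C'$ (the constant in Lemma~\ref{lem:disjoining}), we invoke that lemma to produce opposite pairs $a', b'$ within distance $C'$ of $a, b$ such that $\AAA_{\vareps_a}(a')$ and $\AAA_{\vareps_b}(b')$ miss the combinatorial $r$-balls around $c_{\vareps_a}, c_{\vareps_b}$; via the Lipschitz projections, the twin apartments $\AAA(a')$ and $\AAA(b')$ themselves miss $B(c, r) \subset \Op(X)$.

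The heart of the proof is to connect $a'$ to $b'$ by a path of length $O(n)$ inside $\Op(X) \setminus B(c, r)$. Suppose first $\vareps_a = \vareps_b = \vareps$. Inside $\AAA(a')$, walk from $a'$ via successive elementary $s$-adjacencies ($s \in S$), each strictly increasing $d_{X_\vareps}(\cdot, c_\vareps)$; such an $s$ is always available at each chamber, since no element of the infinite Coxeter group $W$ is a longest element. After $O(n)$ steps the walk is still confined to $\AAA(a')$, hence outside $B(c, r)$, and ends at an opposite pair $a''$ with $d_{X_\vareps}(a''_\vareps, c_\vareps) \geq 2n$. A symmetric walk from $b'$ yields $b''$. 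By the apartment axiom, $a''_\vareps, b''_\vareps$ lie in a common apartment $\AAA_0 \subset X_\vareps$, which contains a gallery between them of length $\leq d_{X_\vareps}(a''_\vareps, b''_\vareps) = O(n)$. Lift this gallery to $\Op(X)$ one elementary adjacency at a time: to move from a current lift $(y, z) \in \Op(X)$ to $(y', z')$ along an $s$-step $y \to y'$, the remark preceding the proof of Lemma~\ref{lem:disjoining} (and its symmetric variant) guarantees that among the chambers of $X_{-\vareps}$ sharing the $s$-panel with $z$, all but at most one are opposite to $y'$, so thickness of $X_{-\vareps}$ provides a valid $z'$. Thus the gallery lifts to a path of the same length $O(n)$ in $\Op(X)$.

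The main obstacle is to verify that this bridge stays outside $B(c, r)$ and to reduce the mixed-sign case. Since $a''_\vareps, b''_\vareps$ are at numerical distance $\geq 2n > r + n$ from $c_\vareps$ and the bridge has length $O(n)$, the triangle inequality in $X_\vareps$ places every intermediate chamber at distance $> r$ from $c_\vareps$; the $1$-Lipschitz projection translates this to $d_\Op(\cdot, c) > r$. For the mixed-sign case $\vareps_a \neq \vareps_b$, we insert an intermediate opposite pair $m$ at bounded distance from $a$, obtained by two successive applications of Lemma~\ref{lem:disjoining} (one in each sign) with $d_{X_\vareps}(m_\vareps, c_\vareps) \geq K r$ for both $\vareps$; this splits the problem into two same-sign sub-problems. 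The total length of the concatenated path is $O(n)$, establishing $\Div_0(n; \frac{1}{2}) = O(n)$.
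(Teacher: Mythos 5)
Your overall scaffolding (reduce to $(\Op(X),d_X)$ via non-distortion, use Lemma~\ref{lem:disjoining} to produce twin apartments whose $\vareps$-projection avoids the ball around $c_\vareps$, exploit the $1$-Lipschitz projections $\Op(X)\to X_\pm$) matches the paper, but the ``bridge'' step contains a genuine gap. You connect $a''_\vareps$ and $b''_\vareps$ by a gallery in an arbitrary common apartment $\AAA_0\subset X_\vareps$ and argue by the triangle inequality that this gallery avoids $B(c_\vareps,r)$ because its endpoints are at distance $\geq 2n$ from $c_\vareps$. That does not follow: you have no control over where $\AAA_0$ sits relative to $c_\vareps$, so the gallery can dip arbitrarily close to $c_\vareps$ unless its length $L$ satisfies $L/2 < 2n - r$. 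But $L = d_{X_\vareps}(a''_\vareps, b''_\vareps)$ and your two ``push-out'' walks each have length up to $\approx 2n$ (when $r$ is small), so $L$ can be as large as $\approx 5n$, which makes $2n - L/2$ negative. Pushing further out does not help, since it enlarges $L$ by the same amount. The paper circumvents this precisely by \emph{not} bridging inside a single $X_\vareps$: it uses \cite[Lemma~2.1]{CR} to build a path from $a''$ to an intermediate point $x=(x_+,b''_-)$ whose positive projection is \emph{contained in the twin apartment} $\AAA_+(a'')$ (already known to miss the ball around $c_+$), and then a second such path from $x$ to $b''$ whose negative projection stays in $\AAA_-(b'')$. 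The twin-building-specific Lemma~2.1 is exactly what gives control over the bridge; your construction (lifting an arbitrary gallery via the thickness/opposition remark) loses all such control.

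Two smaller issues compound this. First, ``Choosing $K\geq C'$'' is not a legitimate move: $K$ is dictated by the bi-Lipschitz comparison of $d_\Op$ with $d_{X_+}+d_{X_-}$, so you cannot assume $d_{X_{\vareps_a}}(a_{\vareps_a},c_{\vareps_a})\geq C'r$ from $d_\Op(a,c)\geq 2r$ alone. This is exactly why the paper first replaces $a$ by a nearby $a'$ with $d_{X_+}(c_+,a'_+)>C'(r+1)$, by extending a geodesic away from $c_+$ (a step your argument omits at the start). Second, the mixed-sign reduction via ``two successive applications of Lemma~\ref{lem:disjoining}'' does not work: applying the lemma in the sign $\vareps_a$ only requires and only controls the $\vareps_a$-coordinate; there is no guarantee that $d_{X_{-\vareps_a}}(a_{-\vareps_a},c_{-\vareps_a})\geq C'r$ holds (it may be $0$), so the second application cannot be launched. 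The paper avoids the case split entirely by forcing $a$ to be far from $c_+$ and $b$ far from $c_-$ from the outset, which is what the alternating-sign bridge is tailored to.
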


As we shall see, the proof below does not use the fact that the
groups in question are lattices. In fact, the arguments do not even
use the property that the underlying buildings are locally compact.
Therefore, the above result also holds for arbitrary groups endowed
with a root group datum.

\begin{proof}
Since twin building lattices are undistorted  (see 
\cite[Theorem~1.1]{CR}), it suffices to prove the result for the divergence
function of the metric space $(\Op(X), d_X)$. Let $a = (a_+, a_-) ,
b= (b_+, b_-), c= (c_+, c_-) \in \Op(X)$. Set $n = d_X(a,b)$ and
$$r = \min \{d_X(a, c), d_X(b, c)\}.$$
We shall construct a path of length $\leq Ln$ joining $a$ to $b$ in
$\Op(X)$ and avoiding the ball of radius $\frac r 2 $ around $c$,
where $L$ is a universal constant depending only on the type of the
building $X$. This implies the desired statement.

Upon exchanging $a$ and $b$, we may assume that $r = d(a,c) \leq
d(b,c)$. Moreover, if $n < r$, then any minimal path from $a$ to $b$
avoids the ball of radius $\frac r 2$ around $c$. We assume
henceforth that $n \geq r$.

Recall from the definition that a path $\gamma$ in $\Op(X)$ consists
of a pair of paths $\gamma_+$ and $\gamma_-$ in $X_+$ and $X_-$
respectively. A sufficient condition for $\gamma$ to avoid the ball
of radius $r$ around $c = (c_+, c_-)$ in $\Op(X)$ is that $\gamma_+$
avoids the ball of radius $r$ around $c_+$, or else that $\gamma_-$
avoids the ball of radius $r$ around $c_-$. This is the key
observation for the construction of desired path. In fact, we shall
construct it by concatenating $6$ pieces according to the following
scheme:
$$a \longrightarrow a' \longrightarrow  a'' \longrightarrow x \longrightarrow b'' \longrightarrow  b' \longrightarrow b.$$
In the successive pieces, the positive and  the negative part of the
path will alternately  avoid the ball of radius $\frac r 2$ around
$c_+$ and $c_-$, thereby providing a path satisfying the required
condition.

The intermediate points are defined as follows. The point $a' =
(a'_+, a'_-)$ is chosen to be any closest point to $a$ with respect
to the property that $d_{X_+}(c_+, a'_+) > C' (r+1) $ (possibly $a'
= a$). For this we just use the fact that in $X_+$, one can extend a geodesic path from $c_+$ to $a_+$ into an infinite geodesic ($X_+$ is non-spherical). One can promote this path into a path in $\Op(X)$ by choosing a suitable path in $X_-$. Thus, moving from $a$ to $a'$ can be achieved in less than $C'(r +1)+1$ steps, which costs a length of at most $2C'(r+1) +2 $ in $X$. Moreover, such a path does not get any closer to $c$ than $a$.

Likewise   $b' =  (b'_+, b'_-)$ is chosen to be any closest
point to $b$ with respect to the property that $d_{X_-}(c_-, b'_-)
> C' (r+1) $. 

The point $a''$ is defined by means of Lemma~\ref{lem:disjoining}, applied to $x=a'$.
It lies at distance at most ${C'}$ from $a'$ and  $\AAA_+(a'')$ 
avoids the ball of radius $ r$ around $c_+$. Thus a shortest path from $a'$ to $a''$ cannot enter  the ball of radius
$ r $ around $c$.


The point $b''$  is defined similarly; it lies at distance at most $
 {C'} $ from $b'$ and  $\AAA_-(b'')$  avoids the ball of radius
$ r $ around $c_-$.

Using Lemma~2.1 from~\cite{CR}, it is then possible to construct a
path $\gamma' = (\gamma'_+, \gamma'_-)$ of length $\leq 2
d_{X_-}(a''_-, b''_-)$ in $\Op(X)$  joining $a''$ to a some point $x
= (x_+, b''_-) \in \Op(X)$ so that the positive part $\gamma'_+$
remains in the twin apartment $\AAA_+(a'')$. In particular it avoids the
ball of radius $ r $ around $c_+$.

Finally, another application of  \cite[Lemma~2.1]{CR} provides a
path $ \gamma'' = (\gamma''_+, \gamma''_-)$ of length $\leq 2
d_{X_+}(x_+, b''_+)$ joining $x$ to $b''$ so that the negative part
$\gamma''_-$ remains in the twin apartment $\AAA_-(b'')$. In particular
it avoids the ball of radius $r$ around $c_-$.

It remains to estimate the total length of the path we have
constructed piecewise. The initial and terminal sections from $a$ to
$a''$ and from $b''$ to $b$ contribute  a piece of length at most
$ 2\times (2C'(r+1) +2 + C')\leq (10C' +4)r$. In particular we deduce
$$d_X(a'', b'') \leq d_X(a'',a) +d_X(a,b) +d_X(b'',b) \leq (10C'+4)r + n \leq (10C'+5)n$$
since $r \leq n$. Therefore, the section from $a''$ to $b''$ via $x$
contributes a piece of length at most
$$\begin{array}{rcl}
2  d_{X_-}(a''_-, b''_-) + 2 d_{X_+}(x_+, b''_+) & \leq & 2 d_X(a'', b'') + 2d_X(x,a'') + 2 d_X(a'',b'')\\
& \leq & 6  d_X(a'', b'')\\
& \leq & 6(10C'+5)n.
\end{array}$$
Adding up all the different contributions, we come to a total length
of at most $(70C' +40)n$, which is linear in $n$ as desired.
\end{proof}

\providecommand{\bysame}{\leavevmode\hbox to3em{\hrulefill}\thinspace}
\providecommand{\MR}{\relax\ifhmode\unskip\space\fi MR }
\providecommand{\MRhref}[2]{%
  \href{http://www.ams.org/mathscinet-getitem?mr=#1}{#2}
}
\providecommand{\href}[2]{#2}


\begin{thebibliography}{DMS09}

\bibitem[BH93]{BrinkHowlett}
Brigitte Brink and Robert~B. Howlett, \emph{A finiteness property and an
  automatic structure for {C}oxeter groups}, Math. Ann. \textbf{296} (1993),
  no.~1, 179--190. \MR{MR1213378 (94d:20045)}

\bibitem[BH99]{Bridson-Haefliger}
Martin~R. Bridson and Andr{\'e} Haefliger, \emph{Metric spaces of non-positive
  curvature}, Grundlehren der Mathematischen Wissenschaften, vol. 319,
  Springer, 1999.

\bibitem[CF08]{CF}
Pierre-Emmanuel Caprace and Koji Fujiwara, \emph{Rank one isometries of
  buildinngs and quasi-morphisms of {K}ac--{M}oody groups}, Preprint, to appear
  in GAFA, 2008.

\bibitem[CR09]{CR}
Pierre-Emmanuel Caprace and Bertrand R{\'e}my, \emph{Non-distortion of twin
  building lattices}, Preprint, 2009.

\bibitem[Dav98]{Davis}
Michael Davis, \emph{Buildings are {${\rm CAT}(0)$}}, Geometry and cohomology
  in group theory, London Math. Soc. Lecture Note Ser., Cambridge Univ. Press,
  1998, pp.~108--123.

\bibitem[DJ99]{DranJan}
A.~Dranishnikov and T.~Januszkiewicz, \emph{Every {C}oxeter group acts amenably
  on a compact space}, Proceedings of the 1999 {T}opology and {D}ynamics
  {C}onference ({S}alt {L}ake {C}ity, {UT}), vol.~24, 1999, pp.~135--141.

\bibitem[DMS09]{DMS}
Cornelia Drutu, Shahar Mozes, and Mark Sapir, \emph{Divergence in lattices in
  semisimple {L}ie groups and graphs of groups}, Trans. Amer. Math. Soc. (to
  appear), 2009.

\bibitem[KL97]{Kleiner-Leeb}
Bruce Kleiner and Bernhard Leeb, \emph{Rigidity of quasi-isometries for
  symmetric spaces and {E}uclidean buildings}, Inst. Hautes \'Etudes Sci. Publ.
  Math. \textbf{86} (1997), 115--197.

\end{thebibliography}
\end{document}